\numberwithin{equation}{section}
\def \P {\mathbb P}
\newtheorem{theorem}{Theorem}[section]
\newtheorem{lem}[theorem]{Lemma}
\theoremstyle{definition}
\DeclareMathOperator{\meas}{\meas}
\renewcommand\@biblabel[1]{#1.}
\long\def\@makecaption#1#2{%
  \vskip\abovecaptionskip
  \sbox\@tempboxa{#1.~#2}%
  \ifdim \wd\@tempboxa >\hsize
    #1.~#2\par
  \else
    \global \@minipagefalse
    \hb@xt@\hsize{\hfil\box\@tempboxa\hfil}%
  \fi
  \vskip\belowcaptionskip}
\begin{document}

\bigskip

\noindent {\large \bf Asymptotic Efficiency of Goodness-of-fit Tests for the Power  }

\medskip

\noindent {\large \bf Function  Distribution Based on Puri--Rubin Characterization.}

\bigskip
\bigskip

\centerline{ \bf Ya. Yu. Nikitin, K. Yu. Volkova\footnote{ e-mail \, yanikit47@gmail.com, \quad efrksenia@yandex.ru      }}

\bigskip

\centerline{ Saint-Petersburg State University}

\bigskip
\bigskip
\bigskip

\parbox{15cm}{ \small  We construct integral and supremum type goodness-of-fit tests
for the family of power distribution functions. Test statistics are functionals
of $U-$empirical processes and are based on the classical characterization of
power function distribution family belonging to Puri and Rubin. We describe the
logarithmic large deviation asymptotics of test statistics under
null-hypothesis, and calculate their local Bahadur efficiency under common
parametric alternatives. Conditions of local optimality of new
statistics are given.}

\bigskip

\noindent{\bf Key words:}{Power function distribution, $U$-statistics, characterizations, Bahadur efficiency}

\bigskip

\noindent{\bf MSC(2000)}: 62F03, \ 62G20, \ 60F10

 \section {Introduction.}

Testing goodness-of-fit for parametric families of distributions remains one of important and interesting statistical problems.
Let ${\cal P}$ be the family of {\it power function distributions} with the distribution functions (d.f.)
\begin{equation}
\label{DF}
F(x) = x^{\lambda}, \ x \in (0,1), \  \lambda>0.
\end{equation}
 It is the member of the beta family and is the "inverse" of Pareto distribution. Power function distribution often appears  in applications, e.g.  in the study of service periods  of queueing systems \cite{andrade},  in the economic models of lead-time and pricing \cite{kazaz}, and in the reliability of electric systems \cite{meni}.

We are interested in goodness-of-fit tests for this family which are independent of unknown parameter $\lambda.$ As far as we know,
the only attempt to build such tests has been traced by Martynov \cite{Mart}, \cite{Mart2} who proposed to use the well-known  Durbin's approach \cite{durbin} based on the empirical process with estimated parameters.

In this paper we develop completely different way introducing and analy\-zing two tests based on characterization of the power function distribution. Consider the following characte\-ri\-za\-tion by Puri and Rubin \cite{PR}:\\
\indent \emph{ Let $X$ and $Y$ be i.i.d. non-negative random variables. Then the equality in law of $X$ and $\min(\frac{X}{Y}, \frac{Y}{X})$ takes place iff  $X$ has some d.f. from the family $\cal P$.}

\smallskip

It should be noted that this result has been obtained by means of monotonic transfor\-ma\-tion from the characterization of exponentiality obtained in  \cite{PR}. This is a common and traditional method to restate the characterization theorems. However, as noted in \cite[p.169]{gala}, "while a property can be interesting for one distribution, it may lose its appeal after a transformation." \, But we find the characterization of the power function distribution family stated above rather convenient for goodness-of-fit purposes.

Let $X_1,X_2,\dots $ be i.i.d. observations with the continuous d.f. $F.$ We are interested in testing the hypothesis $H_0: F \in {\cal P}$ against the general alternative $H_1: F  \notin {\cal P},$ assuming, however, that the alternative d.f. is also concentrated on $(0,1).$

Let  $ F_n(t)=n^{-1}\sum_{i=1}^n\textbf{1}\{X_i<t\}, t \in R^1,$  be the usual empirical d.f. based on the sample $X_1,\dots,X_n.$
According to the Puri-Rubin characterization we introduce the so-called  $U$-empirical d.f., see \cite{Jan}, \cite{Kor}, by
\begin{gather*}
H_n (t)={n \choose 2}^{-1}\sum_{1\leq i<j \leq n}\textbf{1}\{\min(\frac{X_i}{X_j}, \frac{X_j}{X_i})< t\}, \quad t\in (0,1).
\end{gather*}

Consider two statistics which can be used for testing $H_0$ against $H_1:$
\begin{align}
I_n^{PR}&=\int_{0}^{1} \left(H_n(t)-F_n(t)\right)dF_n(t),\label{I_n_PR}\\
D_n^{PR}&=\sup_{t \in [0,1]}\mid H_n(t)-F_n(t)\mid\label{D_n_PR}.
\end{align}

The first of this statistics is of integral type and resembles the classical $\omega_n^1$-statistic while the second is of Kolmogorov type.
We will  describe their limit distributions under $H_0$ and we will calculate their local Bahadur efficiency under certain parametric alternatives. To this end we need their rough large deviation asymptotics under $H_0.$  Moreover, we will  discuss the conditions of their local optimality in the Bahadur sense.

For basic information on Bahadur theory we refer to \cite{Bahadur},  \cite{anirban} and \cite{Nik}. This type of efficiency is most pertinent in our problem as the Kolmogorov type statistics have non-normal distribution and hence the Pitman approach is not applicable.

In Bahadur theory the measure of efficiency of the sequence of statistics $\{T_n\}$ is the exact slope $c_T(\theta)$ describing the exponential decrease rate of ther $P-$values under the alternative. It is well-known (it is the so-called Bahadur-Raghavachari inequality \cite{Bahadur}, \cite{Nik})  that always
$$ c_T (\theta) \leq 2 K(\theta), $$ where $K(\theta)$ is the Kullback-Leibler "distance" between the null-hypothesis and the alternative which is indexed by real parameter $\theta.$
Therefore we may define the local Bahadur efficiency as
$$
eff(T): = \lim_{\theta \to 0}  c_T(\theta) / 2K(\theta).
$$

\section{Statistic $I_{n}^{PR}$}

The statistic $I_{n}^{PR}$ is asymptotically equivalent to the $U$-statistic of degree 3 with the centred kernel
$$
\Psi_{PR}(X,Y, Z)=\frac13\left (\textbf{1}\{\min(\frac{X}{Y}, \frac{Y}{X})< Z\}+\textbf{1}\{\min(\frac{X}{Z}, \frac{Z}{X})< Y\}+\textbf{1}\{\min(\frac{Y}{Z}, \frac{Z}{Y})< X\}\right)-
\frac12.
$$

Note that both statistics  $I_{n}^{PR}$ and $D_n^{PR}$ under $H_0$ are invariant with respect to the change of variable $ X \to X^{1/\lambda}.$ Therefore we may take $\lambda =1,$ i.e.  we can assume that the initial sample is uniform on $(0,1).$

It is well-known, see, e.g. \cite{Hoeffding}, \cite{Kor} that non-degenerate $U$- and $V$-statistics are asymptotically normal.  To prove that
 the kernel $\Psi_{PR}(X,Y,Z)$ is non-degenerate, let calculate its projection
 $\psi_{PR}.$  For fixed $X=s$ we have
$$
\psi_{PR}(s) := E(\Psi_{PR}(X, Y,Z)\mid X=s) =\frac23\P\{\min(\frac{s}{Y}, \frac{Y}{s})< Z\}+\frac13\P\{\min(\frac{Y}{Z}, \frac{Z}{Y})< s\}-\frac12.
$$
First probability can be evaluated as follows:
$$
\P\{\min(\frac{s}{Y}, \frac{Y}{s})<
Z\}=1-\int_{s}^{1}\frac{s}{y} \ dy-\int_{0}^{s}\frac{y}{s} \ dy=1+s\ln{s}-\frac12 s,
$$
and it results from the above characterization that
$$
\P\{\min(\frac{Y}{Z}, \frac{Z}{Y})< s\}=\P\{Y<s\}=s, \ 0 \leq s \leq 1.
$$
Hence we get the final expression for the projection of the kernel:
\begin{equation}
\label{projection}
\psi_{PR}(s) = \frac16+\frac23  s \ln{s}, \ 0 \leq s \leq 1.
\end{equation}

The variance of the projection is  given by
$$\Delta^2_{PR} = \int_{0}^{1} \psi_{PR}^2 (s) ds =\frac{5}{972},
$$
and is positive. Hence, the kernel $\Psi_{PR}(X,Y, Z)$
is non-degenerate. Due to Hoeffding's  theorem \cite{Hoeffding}, \cite{Kor}
$$\sqrt{n}I_{n}^{PR} \stackrel{d}{\longrightarrow}{\cal
{N}}(0,\frac{5}{108}).$$

The kernel  $\Psi_{PR}$ is centred, non-degenerate and bounded. Applying the theorem on large deviations for non-degenerate
 $U$-statistics from \cite{nikiponi}, see also \cite{anirban},  \cite{Nikolm},  we get:
\begin{theorem}
 For $a>0$ it holds true that
$$
\lim_{n\to \infty} n^{-1} \ln \P ( I_n^{PR} >a) = - f (a),
$$
where the function $f$ is analytic for sufficiently small $a>0,$ and that $$ f
(a) \sim \frac{a^2}{18 \Delta^2_{PR}} = \frac{54}{5} \, a^2, \quad \mbox{при} \,
\, a \to 0.
$$
\end{theorem}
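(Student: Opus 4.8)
The plan is to deduce the theorem from the general large--deviation theorem for bounded non-degenerate $U$-statistics of Nikitin and Ponikarov \cite{nikiponi}, after identifying $I_n^{PR}$ with such a $U$-statistic up to a uniformly small remainder. First I would carry out this reduction. Writing the two integrals as sums over the sample, one sees that $\int_0^1 F_n\,dF_n = \tfrac12(1-1/n)$ almost surely (the number of ordered pairs with $X_i<X_k$ equals $\binom n2$, since $F$ is continuous), while $\int_0^1 H_n\,dF_n$ is a $V$-statistic of degree $3$ whose symmetrized kernel is $\tfrac13$ times the sum of the three indicators appearing in $\Psi_{PR}$. Separating the diagonal terms, those in which the third index coincides with one of the first two, and checking that they contribute at most $C/n$ in absolute value uniformly over all samples, one obtains
$$ I_n^{PR} = U_n + R_n, \qquad |R_n|\le C/n \ \ \mbox{a.s.}, $$
where $U_n=\binom n3^{-1}\sum_{i<j<k}\Psi_{PR}(X_i,X_j,X_k)$; the subtracted constant $\tfrac12$ is precisely the mean of the uncentred kernel under $H_0$, because the Puri--Rubin characterization with $\lambda=1$ gives $\min(X/Y,Y/X)\stackrel{d}{=}X\sim U(0,1)$.

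Second, I would verify the hypotheses of the cited large--deviation theorem for $U_n$: the kernel $\Psi_{PR}$ takes values in $[-\tfrac12,\tfrac12]$, hence is bounded; it is centred; and it is non-degenerate, its projection being $\psi_{PR}(s)=\tfrac16+\tfrac23 s\ln s$ with variance $\Delta^2_{PR}=5/972>0$, as computed above. The theorem then provides $a_0>0$ and a function $f$, analytic on $(0,a_0)$ with $f(0+)=0$, such that $\lim_{n\to\infty}n^{-1}\ln\P(U_n>a)=-f(a)$ for $0<a<a_0$, and, the degree being $m=3$,
$$ f(a)\sim \frac{a^2}{2m^2\Delta^2_{PR}}=\frac{a^2}{18\Delta^2_{PR}}=\frac{54}{5}\,a^2, \qquad a\to 0. $$

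Third, I would transfer the asymptotics from $U_n$ to $I_n^{PR}$ by a sandwich argument. From the reduction, $\P(U_n>a+C/n)\le\P(I_n^{PR}>a)\le\P(U_n>a-C/n)$, so taking $n^{-1}\ln$, letting $n\to\infty$, and using continuity of $f$ near $a$ yields $\lim_{n\to\infty}n^{-1}\ln\P(I_n^{PR}>a)=-f(a)$ for $0<a<a_0$, with the same local behaviour. For $a\ge a_0$ the existence of the (possibly infinite) limit still follows from boundedness of the kernel together with the general large--deviation theory for $U$-statistics, so the displayed limit holds for all $a>0$, while analyticity of $f$ is asserted only near the origin.

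The hard part will not be the invocation of the large--deviation theorem, which is used as a black box, but the first step: making precise that the $V$-statistic $I_n^{PR}$ agrees with the $U$-statistic $U_n$ up to a remainder that is $O(1/n)$ \emph{uniformly} in the sample, so that it is exponentially negligible on the $n^{-1}\ln$ scale, and correctly pinning down both the centred kernel $\Psi_{PR}$ and the centring constant $\tfrac12$ via the characterization. The remaining work, namely boundedness, centring, non-degeneracy, and the bookkeeping of $2m^2\Delta^2_{PR}=18\Delta^2_{PR}=5/54$, is routine.
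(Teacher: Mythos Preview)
Your proposal is correct and follows essentially the same route as the paper: identify $I_n^{PR}$ with a degree-$3$ $U$-statistic having the bounded centred non-degenerate kernel $\Psi_{PR}$, then invoke the Nikitin--Ponikarov large-deviation theorem. The paper is considerably terser, simply asserting the asymptotic equivalence of $I_n^{PR}$ to the $U$-statistic and then citing \cite{nikiponi}; your explicit $O(1/n)$ uniform remainder and sandwich argument spell out what the paper leaves implicit, but the underlying strategy is identical.
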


In case of uniform null-distribution, and more generally, for the power function distribution,
there are no accepted standard alternatives. Therefore we consider in this paper
three alternatives: the contamination alternative and two other unnamed alternatives concentrated on $(0,1).$
The expressions of these alternative d.f.'s are as follows:
$$
\begin{array}{ll}

\medskip

G_1(x,\theta)=(1-\theta)x +\theta x^r, \,  0\leq \theta \leq 1,\ r>1, \
x \in (0,1).\\

\medskip

G_2(x,\theta)= x  - \theta \sin(\pi x) , \,
0 \leq \theta< 1/\pi, \ x \in (0,1).\\

\medskip

G_3(x,\theta)= x  + \theta \int_0^x \left( \frac{1}{6} + \frac{2}{3}  y \ln y \right) dy,
 \, 0 \leq \theta \leq 1, \ x \in (0,1).

 \medskip

\end{array}
$$
\noindent The formulas for corresponding densities $g_j(x,\theta), j=1,2,3$ are straightforward.

We will need in the sequel the expressions as $\theta \to 0$ of the Kullback-Leibler "distance" between the null-hypothesis and the considered alternatives. Note that the null-hypothesis is the composite one. We will establish now some general form for this distance as $\theta \to 0.$
\begin{lem}
\label{K_eva}
Let $g(x,\theta)$ be any alternative density on $(0,1)$ which is sufficiently regular so that any differentiation under the sign of integral  in the proof is justifiable and the Kullback-Leibler information (\ref{Kull}) is well-defined. Put
\begin{equation}
\label{Kull}
K(\theta) = \inf_{\lambda>0} \int_0^1 \ln \frac{g(x,\theta)}{\lambda x^{\lambda - 1}} g(x,\theta) dx.
\end{equation}
Then
\begin{equation}
\label{Kull_loc}
2K(\theta)\sim  \theta^2 \left[\int_{0}^{1}(g^{'}_{\theta}(x, 0))^2dx
-\left( \int_{0}^{1} g^{'}_{\theta}(x, 0) \ln{x} dx\right)^2  \right], \, \theta \to 0.
\end{equation}
\end{lem}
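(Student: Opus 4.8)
The plan is to carry out the inner minimization over $\lambda$ in closed form and then expand the resulting one-variable expression in powers of $\theta$ about $\theta=0$, where $g(x,0)\equiv1$ is the uniform density (the member of ${\cal P}$ with $\lambda=1$, so that $K(0)=0$).

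First I would rewrite, for fixed $\theta$ and $\lambda>0$, using $\int_0^1 g(x,\theta)\,dx=1$ and abbreviating $m(\theta):=\int_0^1 g(x,\theta)\ln x\,dx$,
\begin{equation*}
K(\theta,\lambda):=\int_0^1\ln\frac{g(x,\theta)}{\lambda x^{\lambda-1}}\,g(x,\theta)\,dx
=\int_0^1 g\ln g\,dx-\ln\lambda-(\lambda-1)\,m(\theta).
\end{equation*}
Since $\partial^2_\lambda K(\theta,\lambda)=\lambda^{-2}>0$ and $K(\theta,\lambda)\to+\infty$ both as $\lambda\to0^+$ and as $\lambda\to+\infty$ (the latter because $\ln x<0$ on $(0,1)$ forces $m(\theta)<0$), the infimum is attained at the unique stationary point $\lambda^{*}(\theta)=-1/m(\theta)$, which tends to $1$ as $\theta\to0$ because $m(0)=\int_0^1\ln x\,dx=-1$. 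Substituting $\lambda^{*}$ and using $\lambda^{*}m(\theta)=-1$ yields the closed form
\begin{equation*}
K(\theta)=\int_0^1 g(x,\theta)\ln g(x,\theta)\,dx+\ln\bigl(-m(\theta)\bigr)+1+m(\theta).
\end{equation*}

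Next I would expand each summand to order $\theta^2$. Writing $h(x):=g'_\theta(x,0)$ and differentiating $\int_0^1 g(x,\theta)\,dx=1$ at $\theta=0$ gives $\int_0^1 h(x)\,dx=0$. With $g(x,\theta)=1+\theta h(x)+o(\theta)$ and the elementary expansion $t\ln t=(t-1)+\tfrac12(t-1)^2+O((t-1)^3)$ near $t=1$,
\begin{align*}
\int_0^1 g\ln g\,dx
&=\int_0^1(g-1)\,dx+\tfrac12\int_0^1(g-1)^2\,dx+o(\theta^2)\\
&=\tfrac{\theta^2}{2}\int_0^1 h^2(x)\,dx+o(\theta^2),
\end{align*}
since $\int_0^1(g-1)\,dx=0$ and $(g-1)^2=\theta^2 h^2+o(\theta^2)$. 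For the remaining terms, set $a:=m'(0)=\int_0^1 h(x)\ln x\,dx$ and $\varepsilon:=1+m(\theta)=\theta a+o(\theta)$; then $\ln(-m(\theta))+1+m(\theta)=\ln(1-\varepsilon)+\varepsilon=-\tfrac12\varepsilon^2+O(\varepsilon^3)=-\tfrac12\theta^2 a^2+o(\theta^2)$. Adding the two contributions gives $2K(\theta)=\theta^2\bigl[\int_0^1 h^2\,dx-a^2\bigr]+o(\theta^2)$, which is exactly (\ref{Kull_loc}).

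The algebra in the last two steps is routine. The one step that really uses the regularity hypothesis is the passage from the pointwise expansion $g(x,\theta)=1+\theta h(x)+o(\theta)$ to the integrated statements — i.e.\ the legitimacy of differentiating $m(\theta)$ under the integral sign and of controlling the Taylor remainder of $g\ln g$ uniformly (or in $L^2$) so that after integration it still contributes only $o(\theta^2)$. I would also remark that the second-order term of $g$ in $\theta$ never enters the answer: its contributions to $\int_0^1 g\ln g\,dx$ and to $m(\theta)$ appear in $K(\theta)$ with opposite signs and cancel, which is why only $g'_\theta(x,0)$ survives in (\ref{Kull_loc}).
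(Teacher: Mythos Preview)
Your proof is correct and follows essentially the same route as the paper: first solve the minimization over $\lambda$ in closed form to obtain $K(\theta)=\int_0^1 g\ln g\,dx + m(\theta) + \ln(-m(\theta)) + 1$, then extract the $\theta^2$ coefficient. The only cosmetic difference is that the paper computes $K''(\theta)$ symbolically and evaluates at $\theta=0$, whereas you expand each summand directly via $t\ln t\approx (t-1)+\tfrac12(t-1)^2$ and $\ln(1-\varepsilon)+\varepsilon\approx-\tfrac12\varepsilon^2$; both yield the same result.
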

\begin{proof}

The infimum in (\ref{Kull}) is attained for  $\lambda = - (\int_{0}^{1} g(x, \theta)\ln{x} dx)^{-1}$ and equals
\begin{gather}
K(\theta)=\int_{0}^{1}g(x, \theta)\ln{g(x, \theta)} dx+ \int_{0}^{1}g(x, \theta)\ln{x} dx+ \ln(-\int_{0}^{1}g(x, \theta)\ln{x} dx)+1.\label{K_PR}
\end{gather}
As $\theta \to 0$ the function $ K(\theta)$ has the following form:
$$
K(\theta) ~ \sim K(0)+ K'(0) \cdot \theta + \frac12 K''(0) \cdot \theta^2.
$$
It is easy to see that  $K(0)=0$ and that $K'(0)=0.$

Differentiating in $\theta$  two times the right-hand side of  \eqref{K_PR} we get
\begin{gather*}
K''(\theta)=\int_{0}^{1}g''_{\theta^2}(x, \theta)(1+\ln{g(x, \theta)}+\ln{x})dx+ \int_{0}^{1}\frac{{g'_{\theta}}^2(x, \theta)}{g(x, \theta)}dx + \\
+ \int_{0}^{1}g''_{\theta^2}(x, \theta) \ln{x} \ dx \left(\int_{0}^{1}g(x, \theta)\ln{x} dx\right)^{-1}
-\left( \frac{\int_0^1 g'_{\theta}(x, \theta)\ln{x} dx}{\int_{0}^{1}g(x, \theta)\ln{x} dx}\right)^2.
\end{gather*}
Substituting  $\theta =0,$ one obtains the required expression.
\end{proof}

Let calculate  the local Bahadur exact slope
and the local efficiency of the sequence of  statistics  $I_n^{PR}$  for the  alternative  d.f. $G(x,\theta)$ and the density $ g(x,\theta)$ assuming their regularity and the possibility of differentiating under the integral sign.  These conditions are valid for all three alternatives we consider.
Denote also $h(x) = g'_{\theta}(x,0). $  Note that $ \int_0^1 h(x) dx =0.$

 According to the Law of Large Numbers for $U$-statistics
\cite{Kor} the limit in probability of the sequence $I_n^{PR} $ under any such alternative
is equal as $\theta \to 0$ to
\begin{multline*}
b_I(\theta)=\P_{\theta} (\min(\frac{X}{Y}, \frac{Y}{X})< Z) - \frac12
 = 2\int_{0}^{1}g(z, \theta)dz  \int_{0}^{1}g(y, \theta)G(yz,
\theta)dy-\frac12 \sim J(0)+ J'(0) \cdot \theta. \label{bI1_PR}
\end{multline*}

 It is easy to see that $J(0)=0,$  while $ J'(0) = 2\int_{0}^{1}h(z) z dz+2\int_{0}^{1} dz\int_{0}^{1} dy\int_{0}^{yz}h(x)dx. $
Chan\-ging two times the order of integration, we get
$$
\int_{0}^{1} dz\int_{0}^{1} dy \int_{0}^{yz}h(x)dx=\int_{0}^{1} dz\int_{0}^{z}h(x)\left(1-\frac{x}{z}\right)dx= \int_{0}^{1}h(x)\left( x \ln{x} - x \right)dx .
$$
It follows therefore that
\begin{equation}
\label{PR}
b_I(\theta; PR) \sim
3\theta  \int_{0}^{1} \psi_{PR}(x)h(x)ds.
\end{equation}

\textbf{Contamination alternative.} After elementary calculations we get by (\ref{PR}) as $\theta \to 0$ that
$ b_I(\theta) \sim (r-1)^2/2(r+1)^2 \cdot \theta. $ Therefore the local exact slope of the sequence of statistics
$I_n^{PR}$ as $\theta \to 0$ admits the representation $$ c_I(\theta;
PR) \sim \frac{108}{5} \, b_I^2(\theta; PR) = \frac{27(r-1)^4}{5(r+1)^4}\
\theta^2.$$

It is easy to show using (\ref{Kull_loc}) that for the alternative d.f. $G_1$
\begin{equation}
\label{Kull_cont}
 2K(\theta)\sim \frac{(r-1)^4}{r^2 (2r-1)}\theta^2, \, \theta \to 0.
\end{equation}

Hence the local Bahadur efficiency of our test is equal to
$$eff(r;I^{PR})=\lim_{\theta \to 0}\frac{c_I(\theta; PR)}{2K(\theta)}=\frac{27(2r-1)r^2}{5(r+1)^4}\ .$$

This efficiency is reasonably high for moderate values of $r$, its maximum is attained for $r=2+\sqrt{3}$ and equals $0.970.$

\textbf{Second alternative}
 The calculation of local Bahadur efficiency
in the case of alternative $G_2$ is quite similar.  We have by (\ref{PR}) $ b_I(\theta; PR) \sim 0.224 \cdot  \theta^2$, so that the local exact slope of $I_n^{PR}$ as $\theta \to 0$ admits the representation $ c_I(\theta; PR) \sim 1.083 \cdot \theta^2.$

According to (\ref{Kull_loc}), the Kullback-Leibler information in this case satisfies
\begin{equation}
\label{K_2}
2 K(\theta)\sim 1.505 \cdot \theta^2, \, \theta \to 0.
\end{equation}
Consequently, the local Bahadur efficiency of our test is $eff(I^{PR})= 0.719.$

\textbf{ Third alternative.} In the case of the third alternative the calculations are alike, and we obtain
after some calculations that $ c_I(\theta;PR) \sim \frac{5\theta^2}{972}$ as $\theta \to 0.$ The Kullback-Leibler information also satisfies in this case the  relation
\begin{equation}
\label{K_3}
 2K(\theta)\sim \frac{5\theta^2}{972}, \, \theta \to 0.
 \end{equation}

Therefore, the local Bahadur efficiency is equal to  $1,$ and the integral test is locally optimal in Bahadur sense \cite[Ch. 6]{Nik}.
We will return to the cause of this phenomenon in the last section.

\begin{table}[!hhh]\centering

\medskip

\caption{\it Local Bahadur efficiency for the statistic $I_n^{PR}.$ }

\medskip

\begin{tabular}{|c|c|}
\hline
Alternative & Efficiency\\
\hline
$G_1$ & 0.970, $r \approx$ 3.7\\
$G_2$ & 0.719\\
$G_3$ & 1. 000\\
\hline
\end{tabular}
\end{table}

\section{Statistic $D_n^{PR}$}

Now we consider the Kolmogorov type statistic (\ref{D_n_PR}). In this case
for fixed  $t$ the difference $H_n (t) -
F_n (t)$  is a family of $U$-statistics with the kernels
$$
\Xi_{PR}(X, Y;t)= \textbf{1}\{\min(\frac{X}{Y}, \frac{Y}{X})<
t\}-\frac12 \textbf{1}\{X <t\}-\frac12 \textbf{1}\{Y <t\} ,
$$
depending on  $t\in (0,1).$ The projection of this kernel for fixed
$t\in (0,1)$ has the form
$$
\xi_{PR}(s;t) := E\left( \Xi_{PR}(X, Y;t)|X=s\right)=
\P\{\min(\frac{s}{Y}, \frac{Y}{s})< t\}-\frac12 \textbf{1}\{s
<t\}-\frac12 \P\{Y <t\}.
$$
After easy calculations we get
\begin{equation}
\label{proj_d}
\xi_{PR}(s;t) =  \textbf{1}\{s <t\} (\frac12-\frac{s}{t})+ st -\frac12 t.
\end{equation}

Now let calculate the variance $ \delta^2_{PR}(t)$ of this projection.  We have after
some simple calculations
\begin{equation}
\label{proj}
\delta^2_{PR}(t):= E \xi_{PR}^2 (X_1;t) = \frac{1}{12}t(1+t-2t^{2}), \, 0 < t < 1.
\end{equation}

It is easy to see that the supremum of the function $\delta^2_{PR}(t)$
is attained in the point
$t^{*}=\frac{1+\sqrt{7}}{6}$  and equals  $\delta^2_{PR}\approx 0.044.$
Hence our family of kernels  $\Xi_{PR}(X;t)$ by \cite{Nikolm}  is non-degenerate.

The limiting distribution of the statistic  $D_n^{PR}$ is unknown. Using the mehods
developed in \cite{Silv}, one can show that the $U$-empirical process
$$\eta_n(t) =\sqrt{n} \left(H_n (t) - F_n (t)\right), \ t\in
(0,1),
$$ converges weakly as $n \to \infty$ to some centered Gaussian process $\eta(t)$ with
complicated covariance. Then the sequence of statistics $\sqrt{n} D_n^{PR}$ converges in distribution to the random variable $\sup_t |\eta(t)|$
whose distribution we are not able to find. Hence we suggest to use statistical modelling to evaluate  the critical values for the statistics  $D_n^{PR}.$

The family of kernels $\{\Xi_{PR}(X, Y;t)\}, t\in (0,1)$ is not only centred but bounded. Using the results of \cite{Nikolm} on
large deviations of  families of non-degenerate $U$-statistics, we obtain the following result.
\begin{theorem}
For sufficiently small $a>0$ it holds true that
$$
\lim_{n\to \infty} n^{-1} \ln \P ( D_n^{PR} >a) = - k(a),
$$
where the function $k$ is analytic, and moreover  $$
k(a) = \frac{ a^2}{8 \delta^2_{PR}}(1 + o(1)) \sim 2.84 \ a^2, \,
\mbox{as} \, \, a \to 0.
$$
\end{theorem}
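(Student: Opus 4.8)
The plan is to apply the general large deviation theorem for the supremum of a family of non-degenerate $U$-statistics from \cite{Nikolm}, exactly as Theorem 2.1 was obtained from the pointwise theorem of \cite{nikiponi}. First I would record that, as already noted, under $H_0$ the statistic $D_n^{PR}$ is invariant under $X\mapsto X^{1/\lambda}$, so without loss of generality the sample is uniform on $(0,1)$; this fixes the relevant probability measure for the deviation functional. Next I would verify the three hypotheses required by the theorem of \cite{Nikolm}: the family of kernels $\{\Xi_{PR}(\cdot,\cdot;t)\}_{t\in(0,1)}$ is (i) centred, which holds by construction since $E\,\Xi_{PR}(X,Y;t)=F(t)-\tfrac12 F(t)-\tfrac12 F(t)=0$; (ii) uniformly bounded, which is clear because $\Xi_{PR}$ is a combination of indicators with $|\Xi_{PR}|\le \tfrac32$; and (iii) non-degenerate uniformly in $t$ on compacts, which was checked above via $\delta^2_{PR}(t)=\tfrac1{12}t(1+t-2t^2)$ being strictly positive on $(0,1)$ with supremum $\delta^2_{PR}=\delta^2_{PR}(t^*)\approx 0.044$ at $t^*=\tfrac{1+\sqrt7}{6}$.

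Granting these, the cited theorem yields the existence of the limit $\lim_n n^{-1}\ln\P(D_n^{PR}>a)=-k(a)$ with $k$ analytic in a right neighbourhood of $0$, together with the local expansion $k(a)=\dfrac{a^2}{8\,\sup_t \delta^2_{PR}(t)}(1+o(1))$ as $a\to 0$. The factor $8$ (rather than the $18$ appearing in Theorem 2.1) is the one dictated by the degree-$2$ case of the $U$-statistic large deviation machinery: for a family of $U$-statistics of degree $m$ the leading coefficient is $\bigl(2m^2\sup_t\delta^2\bigr)^{-1}$, so with $m=2$ one gets $(8\sup_t\delta^2)^{-1}$. Substituting the numerical value $\sup_t\delta^2_{PR}=\delta^2_{PR}\approx 0.044$ gives $k(a)\sim a^2/(8\cdot 0.044)\approx 2.84\,a^2$, which is the asserted asymptotics.

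The one genuinely substantive point — the ``hard part'' — is checking that the hypotheses of the supremum-type large deviation theorem of \cite{Nikolm} are met, in particular any regularity or equicontinuity condition that theorem imposes on the map $t\mapsto \Xi_{PR}(\cdot,\cdot;t)$ (e.g. that the kernels vary in a suitably controlled way so that the supremum over $t$ does not inflate the deviation rate). Here one uses that $\xi_{PR}(s;t)$ in \eqref{proj_d} is piecewise-linear and jointly well-behaved in $(s,t)$, and that the class of indicator-based kernels indexed by $t$ forms a VC-type (hence Donsker/uniformly manageable) class, so the pointwise rate function is also the uniform one on the relevant scale. Once that verification is in place, everything else is a direct quotation of \cite{Nikolm} followed by the elementary substitution of the maximal variance $\delta^2_{PR}$, and the proof is complete.
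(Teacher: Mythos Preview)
Your proposal is correct and follows essentially the same route as the paper: verify that the family $\{\Xi_{PR}(\cdot,\cdot;t)\}$ is centred, bounded and non-degenerate (using the already computed $\delta^2_{PR}(t)$), then invoke the large deviation theorem for suprema of families of non-degenerate $U$-statistics from \cite{Nikolm} and read off the leading coefficient $(8\delta^2_{PR})^{-1}$. The paper's argument is in fact terser than yours---it does not spell out the VC/equicontinuity discussion---so your additional remarks on regularity of $t\mapsto\Xi_{PR}(\cdot,\cdot;t)$ are extra care rather than a different idea.
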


\textbf{Contamination alternative.}  Let calculate the local Bahadur slope
and local efficiency of the statistic \eqref{D_n_PR}  for the alternative d.f.
$G_1(x,\theta).$  By Glivenko-Cantelli theorem for  $U$-empirical d.f.'s \cite{Jan} the limit of $D_n^{PR}$  almost surely
under any alternative   is equal as $\theta \to 0$ to
\begin{align}\label{bD1_PR}
  b_D(t,\theta; PR)=\sup_{0 \leq t \leq 1}|2\int_{0}^{1}g(y, \theta)G(ty, \theta)dy-G(t,\theta) |.
\end{align}
Assuming the regularity of the alternative d.f., we can deduce
\begin{equation}\label{asympt_b}
  b_D(t,\theta; PR) \sim 2\sup_{0 \leq t \leq 1}|\int_0^1 \xi_{PR}(s;t) h(s) ds |\cdot \theta,
\end{equation}
where $\xi_{PR}(s;t) $ is from (\ref{proj_d}). Applying this formula we get for our alternative
$$
b_D(t,\theta; PR) \sim \frac{(r-1)^2}{r+1}r^{-\frac{r}{r-1}}\theta, \, \theta \to 0.
$$
Hence, the local exact slope of the sequence of statistics
$D_n^{PR}$ as $\theta \to 0$ admits the representation $$ c_D(\theta;
PR) \sim \frac{5.68(r-1)^4r^{-\frac{2r}{r-1}}}{(r+1)^2}\ \theta^2.$$
The Kullback-Leibler information satisfies  (\ref{Kull_cont}).
Hence the local Bahadur efficiency of our test is equal to
$$eff(r;D^{PR})= \frac{5.68 (2r-1)r^{-\frac{2}{r-1}}}{(r+1)^2} .$$

It can be shown that the maximal value of the local efficiency for the sequence $\{D_n^{PR}\}$  is attained for $r=4.64$ and
is  equal to $0.636$ while its values for $3 \leq r \leq 12$ are larger than 0.5.

\textbf{Second alternative.}  The calculation of local Bahadur efficiency
in the case of alternative $G_2$ is quite similar.  We have as $\theta \to 0$ by (\ref{asympt_b})
$ b_D(\theta; PR) \sim 0.367\cdot  \theta.$
Therefore the local exact slope of $D_n^{PR}$  admits the asymptotics $ c_D(\theta;PR) \sim 0.765 \cdot \theta^2.$
The Kullback-Leibler information in this case is  given by (\ref{K_2}). Hence the local Bahadur efficiency of our test is $eff(D^{PR})= 0.508.$

\textbf{ Third alternative.}  In this case we get for  the alternative d.f.
$G_3(x,\theta)$ as $\theta \to 0$ that  $ b_D(\theta; PR) \sim  0.0249 \cdot \theta.$
Hence the local exact slope of the sequence of statistics
$D_n^{PR}$ as $\theta \to 0$ admits the representation $ c_D(\theta;
PR) \sim 0.00352 \cdot \theta^2.$
We know that the Kullback-Leibler information in this case
satisfies (\ref{K_3}). Thus the local Baha\-dur efficiency of our test
is equal to $0. 685.$
\begin{table}[!hhh]\centering
\medskip
\caption{\it Local Bahadur efficiency for statistic $D_n^{PR}.$ }
\begin{tabular}{|c|c|}
\hline
Alternative & Efficiency \\
\hline
$G_1$ & 0.636, for $ r \approx$ 4.64  \\
$G_2$ & 0.508\\
$G_3$ & 0.685\\
\hline
\end{tabular}
\end{table}

It is seen that the Kolmogorov statistic
is less efficient than the integral statistic $I_n^{PR}$ as usually in goodness-of-fit testing \cite{Nik}.

\section{Conditions of local asymptotic optimality}
In this section we are interested in conditions of local asymptotic optimality (LAO) in Bahadur sense
for both sequences of statistics $I_n^{PR}$  and $D_n^{PR}.$ This means to describe the local structure of the
alternatives for which the given statistic has maximal potential local efficiency so that the relation
$$
c_T(\theta) \sim 2 K(\theta),\, \theta \to 0,
$$
holds, see \cite{Nik},\cite{gini}.  Such alternatives form the domain of LAO for the given sequence of statistics.

Consider the functions
\begin{gather*}
H(x)=G^{'}_{\theta}(x,\theta)\mid
_{\theta=0},\quad
h(x)=g^{'}_{\theta} (x,\theta)\mid _{\theta=0}.
\end{gather*}
We will assume that the following regularity conditions are true, see also \cite{gini}:
\begin{gather}
\int_{0}^{1} h^2(x)dx <  \infty  \quad  \mbox{where} \quad  h(x)=H'(x), \, \label{PR1} \\
 \frac{\partial}{\partial\theta}\int_{0}^{1} g(x,\theta) q(x)dx \mid
_{\theta=0} \ = \ \int_{0}^{1} h(x)q(x)dx \quad  \forall q \in L_1 (0, 1).  \label{PR2}
\end{gather}
Denote by $\cal G$ the class of densities  $ g(x,\theta)$  with d.f.'s $G(x,\theta),$ satisfying the regularity conditions (\ref{PR1}) - (\ref{PR2}). We are going to deduce the LAO conditions in terms of the function $h(x).$

For alternative densities from  $\cal G$  the arguments of Lemma \ref{K_eva}
are true, hence the asymptotics
$$
2K(\theta)\sim \left\{\int_{0}^{1} h^2(x)dx -\left( \int_{0}^{1}h(x)\ln{x} dx\right)^2\right\}\theta^2,\quad  \theta \to 0,
$$
is valid.

First consider the integral statistic  $I_n^{PR}$  with the kernel
$\Psi_{PR}(x, y, z)$ and its projection $\psi_{PR}(x) =  \frac16+\frac23 x \ln{x}.$
Let introduce the auxiliary function
$$
h_0(x) = h(x) - (\ln{x}+1)\int_0^\infty \ln{u} h(u) du.
$$

Simple calculations show that
\begin{gather*}
\medskip
\int_{0}^{1} h^2(x)dx -\left(\int_{0}^{1} h(x)\ln{x} dx\right)^2= \int_0^{1} h_0^2(x) dx,\\
\int_{0}^{1} \psi_{PR}(x)h(x)dx = \int_{0}^{1} \psi_{PR}(x)h_0(x)dx,\\
\int_{0}^{1} \xi_{PR}(x; t)h(x)dx = \int_{0}^{1} \xi_{PR}(x; t )h_0(x)dx \quad  \text{for any } \, t \in (0,1).
\end{gather*}

Hence the local asymptotic efficiency by (\ref{PR}) takes the form
\begin{gather*}eff(I_n^{PR})= \lim_{\theta \to 0} b_I^2(\theta; PR) / \left(9\Delta^2_{PR} \cdot 2K(\theta)\right) =\\
= \left(\int_{0}^{1} \psi_{PR}(x)h_0(x)dx\right)^2/\left(
\int_{0}^{1}\psi_{PR}^2(x) dx \cdot   \int_0^{1} h_0^2(x) dx
 \right).
\end{gather*}

By Cauchy-Schwarz inequality
we obtain that  the expression in the right-hand side is equal to 1 iff
 $h_0(x)=C_1\psi_{PR}(x)$  for some constant $C_1>0,$
so that $h(x) =C_1\psi_{PR}(x)+ C_2 (\ln{x}+1)$  for some constants  $C_1>0$ and $C_2.$ The set of distributions
for which the function $h(x)$ has such form generate the domain of LAO in the class $\cal G$.
The example of such alternative is the density $g(x,\theta)$ which for  small $\theta > 0$  satisfies the formula
\begin{equation}
\label{special0}
g(x,\theta)=1+\theta\left( \frac16+\frac23  x \ln{x}\right), \  0 \leq x \leq 1.
\end{equation}
This explains why the  third alternative leads to asymptotic optimality of the test based on $I_n^{PR}.$ It is in perfect agreement with the findings of the paper \cite{peau} where similar problems were solved for the simple null-hypothesis.

Now let consider the Kolmogorov type statistic  $D_n^{PR}$  with the family of kernels  $\Xi_{PR}(X, Y;t)$ and their
projections $\xi_{PR}(x;t)=\textbf{1}\{x <t\} (\frac12-\frac{x}{t})+xt-\frac{t}{2}.$
In this case it is easy to see that the following asymptotics is true:
\begin{gather}
 b_D(\theta; PR) \sim 2 \theta \sup_{t\in (0,1]}
\mid  \int_{0}^{1}  \xi_{PR}(x;t)h_0(x)dx\mid. \label{bPR_LAO}
\end{gather}

Hence the local efficiency takes the form
\begin{multline*}
eff(D^{PR})= \lim_{\theta \to 0} \left [ b_D^2(\theta;PR)/ \sup_{t\in(0,1)}\left(
4\delta^2_{PR}(t)\right)\cdot 2K(\theta) \right ] =\\ = \sup_{t\in(0,1)}\left(
\int_{0}^{1}  \xi_{PR}(x;t)h_0(x)dx\right)^2 / \ \sup_{t\in (0,1)} \left(
\int_{0}^{1} \xi_{PR}^2 (x,t) dx \cdot \int_{0}^{1}
h_0^2(x)dx\right) \leq 1.
\end{multline*}

We can apply once again  the Cauchy-Schwarz inequality to the integral in \eqref{bPR_LAO}. It follows that the
sequence of statistics $D_n^{PR}$ is locally asymptotically optimal, and $eff(D^{PR})=1$  iff  $h(x)=C_3\xi_{PR}(x, t_0)+C_4(\ln{x}+1)$  for
$t_0= \arg \sup_{t\in (0,1)}\delta_{PR}^2(t)=\frac{1+\sqrt{7}}{6}$ and some constants $C_3>0$ and  $C_4.$
The distributions with such  $h(x)$ form the domain of LAO in the class  $\cal G$. The simplest example of
such alternative density  $g(x,\theta)$  which for small $\theta > 0$ is given by the formula
\begin{equation}
\label{special1}
g(x,\theta)=1+\theta
\left(\textbf{1}\{x <t_0\} \left(\frac12-\frac{x}{t_0}\right)+xt_0-\frac{t_0}{2}\right), 0\leq x \leq 1,  \, \text{ where }t_0= \frac{1+\sqrt{7}}{6}.
\end{equation}

Hence we see that there exist  special alternative densities (\ref{special0}) and (\ref{special1}) of relatively simple form for which our sequences of statistics are locally asymptotically optimal. This stresses their merits and potential utility.

\medskip

{\small \textbf{Acknowledgement} The research of  both authors was supported by the grant of RFBR  10-01-00154,  by the grants  NSh-1216.2012.1 and FZP 2010-1.1-111-128-033. }

\medskip

\end{document}